\newcommand{\bc}{\begin{center}}
\newcommand{\ec}{\end{center}}
\newcommand{\ba}{\begin{array}}
\newcommand{\ea}{\end{array}}
\renewcommand{\d}{\delta }
\renewcommand{\l}{\lambda }
\renewcommand{\dfrac}{\displaystyle\frac }
\newcommand{\dint }{\displaystyle\int }
\newtheorem{thm}{Theorem\ }[section]
\newtheorem{rem}{ Remark\ }[section]
\newtheorem{cor}{Corollary }[section]
\newtheorem{lem}{ Lemma\ }[section]
\newtheorem{proposition}{\quad\quad Proposition\ }[section]
\numberwithin{equation}{section}
\numberwithin{thm}{section}
\numberwithin{defn}{section}
\numberwithin{lem}{section}
\numberwithin{cor}{section}
\numberwithin{rem}{section}
\title{Regularity Criterion to the axially symmetric   Navier-Stokes Equations\thanks{ E-mail address: jnwdyi@163.com (D.Y. Wei).} \thanks
{}}
\author{\small\bf Τ¶«ÞÈ Dongyi Wei    \\
    School of Math Sciences and BICMR,  Peking University, \\  Beijing,  ~100871,~ \  People's
 Republic  of  China.
    }
\date{}
\begin{document}
\maketitle

\setcounter{tocdepth}{3} 


\begin{abstract}
\vskip 3mm
 Smooth solutions to the axially symmetric Navier-Stokes equations obey the following
maximum principle:$\|ru_\theta(r,z,t)\|_{L^\infty}\leq\|ru_\theta(r,z,0)\|_{L^\infty}.$
We first
prove the global regularity of solutions if
$\|ru_\theta(r,z,0)\|_{L^\infty}$ or $ \|ru_\theta(r,z,t)\|_{L^\infty(r\leq r_0)}$ is
small compared with certain dimensionless quantity of the initial data. This result improves the one in Zhen Lei and Qi S. Zhang \cite{1}. As a corollary, we also
prove the global regularity under the assumption that $|ru_\theta(r,z,t)|\leq\ |\ln r|^{-3/2},\ \ \forall\ 0<r\leq\delta_0\in(0,1/2).$
\end{abstract}
\bc
\begin{minipage}{13 cm}
{\bf Key words }\ axially symmetric,\ Navier-Stokes Equations,\ Regularity Criterion. \ \vskip 0.1cm {\bf
 MSC2010} \ \ 35Q30,\
 \ 76D05,   \ 76D07, \ 76N10. \
\end{minipage}
\ec

\baselineskip 18pt

\section{Introduction}

\ \ \ \ In the cylindrical coordinate system with $(x_1,x_2,x_3)=(r\cos\theta,r\sin\theta,z)$, an axially symmetric solution of the Navier-Stokes equations is a solution of the following form
$$u(x,t)=u_r(r,z,t)e_r+u_{\theta}(r,z,t)e_{\theta}+u_z(r,z,t)e_z,\ p(x,t)=p(r,z,t),$$ where $$e_r=\left(\frac{x_1}{r},\frac{x_2}{r},0\right),\ e_{\theta}=\left(-\frac{x_2}{r},\frac{x_1}{r},0\right),\ e_z=(0,0,1).$$
In terms of $(u_r, u_{\theta}, u_z, p)$, the axially symmetric Navier-Stokes equations are as follows
\begin{equation}\left\{\ba{l}\partial_t u_r+u\cdot\nabla u_r-\triangle u_r+\frac{u_r}{r^2}-\frac{u_{\theta}^2}{r}+\partial_r p=0,\\[3mm] \partial_t u_{\theta}+u\cdot\nabla u_{\theta}-\triangle u_{\theta}+\frac{u_{\theta}}{r^2}+\frac{u_ru_{\theta}}{r}=0,\\[3mm]
\partial_t u_z+u\cdot\nabla u_z-\triangle u_z+\partial_z p=0,\\[3mm]
\partial_r (ru_r)+\partial_z (ru_z)=0.\ea\right.\end{equation}
It is well-known that finite energy smooth solutions of the Navier-Stokes equations satisfy
the following energy identity \begin{equation}\label{nseq1.2}
   \|u(t)\|_{L^2}^2+2\int_0^t\|\nabla u(s)\|_{L^2}^2\mathrm{d}s=\|u_0\|_{L^2}^2<+\infty.
 \end{equation}
 Denote $\Gamma=ru_\theta$. One can easily check that \begin{equation}
   \partial_t \Gamma+u\cdot\nabla \Gamma-\triangle \Gamma+\frac{2}{r}\partial_r \Gamma=0.
 \end{equation}
 A significant consequence of (1.3) is that smooth solutions of the axially symmetric Navier-Stokes
equations satisfy the following maximum principle (see, for instance,\cite{2}\cite{3})\begin{equation}
   \|\Gamma\|_{L^\infty}\leq\|\Gamma_0\|_{L^\infty}.
 \end{equation}
 We can compute the vorticity $$\omega=\nabla\times u=\omega_r e_r+\omega_{\theta} e_{\theta}+\omega_z e_z,$$
where\ \ $$\omega_r=-\partial_z(u_{\theta}),\\ \omega_{\theta}=\partial_z(u_r)-\partial_r(u_z),\ \omega_z=\dfrac{1}{r}\partial_r(ru_{\theta}). $$ Denote $$\Omega=\dfrac{\omega_{\theta}}{r},\ J=\dfrac{\omega_{r}}{r}=-\dfrac{\partial_z u_\theta}{r},$$ then
\begin{equation}\left\{\ba{l}\partial_t \Omega+u\cdot\nabla \Omega-\left(\triangle+\dfrac{2}{r}\partial_r\right)\Omega+2\dfrac{u_{\theta}}{r}J=0,\\[2mm] \partial_t J+u\cdot\nabla J-\left(\triangle+\dfrac{2}{r}\partial_r\right)J-\left(\omega_r \partial_r+\omega_z \partial_z \right)\dfrac{u_r}{r}=0.\ea\right.\end{equation}
We emphasis that $J$ was introduced by Chen-Fang-Zhang in \cite{5}, while
$\Omega$ appeared much
earlier and can be at least tracked back to the book of Majda-Bertozzi in \cite{17}. Both of the
two new variables are of great importance in our work.

 Our goal is to prove that the smallness of $\|\Gamma\|_{L^\infty(r\leq r_0)}$ or $\|\Gamma_0\|_{L^\infty}$
implies the global regularity of the solutions.  Here is our result.
\begin{thm}\label{thm1.1}
    Let $r_0 > 0$. Suppose that $u_0\in H^2$ such that
 $\Gamma_0\in L^\infty$. Denote
 $$M_1=(1+\|\Gamma_0\|_{L^\infty})\|u_0\|_{L^2} \ \mbox{and} \  M_0=(\|J_0\|_{L^2}+\|\Omega_0\|_{L^2})M_1^3.$$ Then there exists an absolute positive constant $C_0 > 0$ such that if  $$\ \ (a)\ \ \|\Gamma\|_{L^\infty(r\leq r_0)}\leq \left(1+\ln\left(C_0\max\left\{M_0^{1/4},r_0^{-1/2}{M_1}\right\}
 +1\right)\right)^{-3/2},$$ then the axially symmetric Navier-Stokes equations are globally well-posed.
\end{thm}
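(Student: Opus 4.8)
The plan is to run a bootstrap/continuation argument: starting from the local $H^2$ solution, we derive \emph{a priori} bounds that are finite on any time interval of existence, and whose only "non-absolute" input is the smallness of $\|\Gamma\|_{L^\infty(r\le r_0)}$ together with the dimensionless data $M_0, M_1$. The key quantities to control are $\Omega=\omega_\theta/r$ and $J=\omega_r/r$ in $L^2$ (equivalently, weighted $L^2$ bounds on the vorticity), since once $\|\Omega\|_{L^2}+\|J\|_{L^2}$ stays bounded one recovers $u\in L^\infty_t H^1$ of the velocity and then full regularity by the standard axisymmetric theory (e.g.\ the Ladyzhenskaya--type criteria or the arguments in \cite{1,5}). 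So the heart of the matter is an energy estimate for the coupled system (1.6).

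The main steps, in order, are as follows. First, multiply the $\Omega$-equation by $\Omega$ and the $J$-equation by $J$, integrate over $\mathbb R^3$, and use the structure of the operator $\triangle+\frac2r\partial_r$ (which is the Laplacian acting on the $e_\theta$-component, so it is dissipative in the weighted space and produces $\|\nabla\Omega\|_{L^2}^2$, $\|\nabla J\|_{L^2}^2$ up to harmless lower-order terms). The transport terms $u\cdot\nabla$ vanish after integration by parts since $\nabla\cdot u=0$. This leaves two problematic nonlinear terms: the coupling $2\frac{u_\theta}{r}J\,\Omega = 2\frac{\Gamma}{r^2}J\,\Omega$ in the first equation, and the stretching term $(\omega_r\partial_r+\omega_z\partial_z)\frac{u_r}{r}\cdot J$ in the second. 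Second, I would split the region $\{r\le r_0\}\cup\{r>r_0\}$. On $\{r\le r_0\}$ the coupling term is estimated using the smallness of $\Gamma$ there: $|2\frac{\Gamma}{r^2}J\Omega|\le 2\varepsilon \frac{|J||\Omega|}{r^2}$ with $\varepsilon=\|\Gamma\|_{L^\infty(r\le r_0)}$, and the weight $\frac1{r^2}$ is absorbed by the dissipation via a Hardy inequality in the $r$-variable (this is exactly where the logarithmic threshold shape $(1+\ln(\cdots))^{-3/2}$ will come from — one pays a logarithm when converting the $L^\infty$ smallness of $\Gamma$ into a usable weighted bound, and the power $3/2$ matches the one already visible in the abstract's corollary). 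On $\{r>r_0\}$ the weight $\frac1{r^2}\le r_0^{-2}$ is bounded, so the coupling term is controlled by $r_0^{-2}\|\Gamma\|_{L^\infty}\|J\|_{L^2}\|\Omega\|_{L^2}$, handled by Young's inequality and the energy bound $\|u\|_{L^2}\le\|u_0\|_{L^2}$ from (1.2); this is where the $r_0^{-1/2}M_1$ term in the statement enters. Third, the vortex-stretching term is handled by the usual trick for axisymmetric flows: write $\frac{u_r}{r}$ and use that $\partial_r\frac{u_r}{r}$, $\partial_z\frac{u_r}{r}$ are controlled by $\nabla\Omega$ (via the elliptic system relating $(u_r,u_z)$ to $\omega_\theta$), so this term is bounded by $\|J\|_{L^4}\|\omega\|_{L^4}\|\nabla\Omega\|_{L^2}$-type expressions, then interpolated ($L^4$ between $L^2$ and $\dot H^1$ in 3D) and absorbed into the dissipation at the cost of a term like $(\|J\|_{L^2}^2+\|\Omega\|_{L^2}^2)^{3}$ on the right — hence the cube in the definition $M_0=(\|J_0\|_{L^2}+\|\Omega_0\|_{L^2})M_1^3$.

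Putting these together yields a differential inequality of the form
\[
\frac{d}{dt}\bigl(\|\Omega\|_{L^2}^2+\|J\|_{L^2}^2\bigr)+\bigl(1-C\varepsilon(1+\ln(\cdots))\bigr)\bigl(\|\nabla\Omega\|_{L^2}^2+\|\nabla J\|_{L^2}^2\bigr)\le C\,(\text{data})\,\bigl(\|\Omega\|_{L^2}^2+\|J\|_{L^2}^2\bigr)^{?},
\]
and when $\varepsilon$ satisfies hypothesis (a) the dissipation coefficient is positive, so a Gronwall / continuity argument closes the bound globally in time. Finally one upgrades $\|\Omega\|_{L^2}+\|J\|_{L^2}\in L^\infty_t$ to $u\in L^\infty_tH^2$ and invokes the local well-posedness to extend the solution for all time.

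I expect the main obstacle to be the \emph{logarithmic} bookkeeping in the region $\{r\le r_0\}$: one cannot simply use $\|\Gamma\|_{L^\infty}$ against $\frac1{r^2}$, because $\frac1{r^2}$ is not quite absorbable by $\|\nabla\cdot\|_{L^2}^2$ near the axis without loss; the fix is a refined weighted estimate (a Hardy-type inequality with a logarithmic correction, or a localized estimate on dyadic annuli $\{2^{-k-1}r_0< r\le 2^{-k}r_0\}$ summed against a slowly growing weight), and getting the exponent exactly $3/2$ — so that it dovetails with the stated corollary $|ru_\theta|\le|\ln r|^{-3/2}$ — is the delicate point. A secondary technical nuisance is making every constant genuinely absolute (independent of $r_0$ and the data) so that $C_0$ in the statement is well-defined; this forces careful tracking of how $r_0$ and $\|u_0\|_{L^2}$, $\|\Gamma_0\|_{L^\infty}$ combine into $M_0$ and $M_1$ at each step.
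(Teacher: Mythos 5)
Your skeleton (energy identities for $J$ and $\Omega$, vanishing of the transport terms, the two dangerous nonlinear terms, a splitting near/away from the axis) matches the paper's, but the two ideas that actually make Theorem 1.1 work are missing, and the substitutes you propose would not deliver the stated threshold. First, the paper does not absorb $\frac{1}{r^2}$ by any Hardy-type inequality (the critical 2D Hardy inequality with weight $r\,dr$ fails, as you note); a fixed-radius ``log-corrected Hardy'' would require pointwise decay $|\Gamma|\lesssim|\ln r|^{-2}$, which is exactly the older result of \cite{1} that this theorem is designed to improve. Instead the paper introduces $v(r,z,t)=\int_0^r|u_\theta|\,\mathrm{d}r'$ and $a(t)=\|v/r\|_{L^\infty}$, proves $a(t)^2\le\|J\|_{L^2}\|u_\theta/r\|_{L^2}$ (Lemma 2.2), and then in Lemma 2.3 combines the two bounds $v\le ra(t)$ and $v\le\varepsilon(1+\ln\frac{ra(t)}{\varepsilon})$ to show $\int\frac{|u_\theta|}{r}|f|^2\,\mathrm{d}x\le\varepsilon^{-1/3}\int|\partial_r f|^2\,\mathrm{d}x+(\text{tail})$, valid only on a \emph{time-dependent} cutoff scale $r(t)=\min\{\varepsilon K(\varepsilon)/a(t),\,r_0\}$; the exponent $3/2$ comes from solving $\varepsilon(1+\ln K+\tfrac12(\ln K)^2)=\varepsilon^{-1/3}$, not from a Hardy loss. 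The stretching term is likewise rewritten as $\int u_\theta e_\theta\cdot(\nabla J\times\nabla\frac{u_r}{r})\,\mathrm{d}x\le\frac12\|\nabla J\|_{L^2}^2+\frac12\|u_\theta\nabla\frac{u_r}{r}\|_{L^2}^2$ and fed back into Lemma 2.3 together with $\|\nabla^2\frac{u_r}{r}\|_{L^2}\le\|\partial_z\Omega\|_{L^2}$ (Lemma 2.1) — there is no $L^4$ interpolation and no cubic term from this step, so your attribution of the cube in $M_0=(\|J_0\|_{L^2}+\|\Omega_0\|_{L^2})M_1^3$ is incorrect.

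Second, the closing argument is not a linear Gronwall with a positive dissipation coefficient. Because $1/r(t)^2\le a(t)^2/(\varepsilon K(\varepsilon))^2\le A(t)^{1/2}\|\nabla u\|_{L^2}/(\varepsilon K(\varepsilon))^2$, the tail terms produce the \emph{superlinear} inequality $\frac{\mathrm{d}}{\mathrm{d}t}A(t)\le CM_2\|\nabla u\|_{L^2}^2\max\{A(t)^{4/3}(\varepsilon K(\varepsilon))^{-8/3},\,r_0^{-4}\}$, where $A=\|J\|_{L^2}^2+\frac{\varepsilon^{2/3}}{2}\|\Omega\|_{L^2}^2$. This is closed by comparing with the ODE via $F(y)=\int_y^\infty[\max\{\cdots\}]^{-1}\mathrm{d}y$ and the global energy budget $\int_0^{T^*}\|\nabla u\|_{L^2}^2\,\mathrm{d}s\le\frac12\|u_0\|_{L^2}^2$: one needs $F(A(0))>C_1M_2\|u_0\|_{L^2}^2$, and unwinding this inequality is precisely what produces the constants $M_0$, $r_0^{-1/2}M_1$ and the exponent $-3/2$ in condition (a). Without Lemmas 2.2--2.3 and this nonlinear continuation argument, your outline cannot reach the claimed threshold.
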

\begin{rem}\label{rem1.1} Choose $r_0 > 0$ such that $ M_0^{1/4}\geq r_0^{-1/2}{M_1}$ and use (1.4), then we can obtain the following global regularity condition
 $$(b)\ \ \|\Gamma_0\|_{L^\infty}\leq (1+\ln(C_0M_0^{1/4}+1))^{-3/2},\ \ $$ this condition depends only on the initial value and is very useful especially when $\|\Gamma_0\|_{L^\infty}$ is very small, in this sense it improves the result in \cite{1}. On the other hand, if we take $r_0\rightarrow0^+$ in condition
$(a)$ we can obtain an important corollary.
\end{rem}

\begin{cor}\label{cor1.1}Let $\delta_0\in (0, 1/
2 ),\  u$ be the strong solution of the axially
symmetric Navier-Stokes equations with initial value $u_0\in H^2$ and $\|\Gamma_0\|_{L^\infty}<\infty$. If\begin{equation}\label{1.6}
   |\Gamma(r,z,t)|\leq\ |\ln r|^{-3/2},\ \ \forall\ 0<r\leq\delta_0,
 \end{equation}then $u$ is regular globally in time.
\end{cor}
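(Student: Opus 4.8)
We obtain Corollary \ref{cor1.1} as a limiting case of Theorem \ref{thm1.1}: the plan is to choose the radius $r_0$ in condition $(a)$ small enough that the decay \eqref{1.6} of $\Gamma$ near the axis is dominated by the logarithmic threshold appearing there. Since $u_0\in H^2$ and $\|\Gamma_0\|_{L^\infty}<\infty$ by hypothesis, the quantities $M_1=(1+\|\Gamma_0\|_{L^\infty})\|u_0\|_{L^2}$ and $M_0=(\|J_0\|_{L^2}+\|\Omega_0\|_{L^2})M_1^3$ are finite numbers depending only on the initial data; it therefore suffices to produce one $r_0\in(0,\delta_0]$ for which hypothesis $(a)$ of Theorem \ref{thm1.1} is met, since that theorem then yields that $u$ is regular globally in time.

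First I would note that $r\mapsto|\ln r|^{-3/2}=(\ln(1/r))^{-3/2}$ is increasing on $(0,1)$, so \eqref{1.6} yields, for every $r_0\in(0,\delta_0]$, the bound $\|\Gamma\|_{L^\infty(r\le r_0)}\le(\ln(1/r_0))^{-3/2}$. Consequently condition $(a)$ holds as soon as
\[1+\ln\!\Big(C_0\max\{M_0^{1/4},\,r_0^{-1/2}M_1\}+1\Big)\le\ln(1/r_0).\]
Next I would observe that once $r_0\le M_1^2M_0^{-1/2}$ the maximum equals $r_0^{-1/2}M_1$, and once $r_0\le(C_0M_1)^2$ the left-hand side is at most $1+\ln(2C_0M_1)+\tfrac12\ln(1/r_0)$. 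Since $\tfrac12\ln(1/r_0)$ is strictly less than $\ln(1/r_0)$, the displayed inequality is then implied by $\ln(1/r_0)\ge 2+2\ln(2C_0M_1)$, i.e. by $r_0$ being below an explicit constant determined by $C_0$ and $M_1$. Taking $r_0$ to be the minimum of $\delta_0$ and these finitely many positive thresholds, Theorem \ref{thm1.1} applies with this $r_0$ and gives the claim.

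The argument requires no analysis beyond Theorem \ref{thm1.1}; the only things to watch are that the several smallness requirements on $r_0$ are mutually compatible (they are, since each is a fixed positive number depending only on the data) and that one correctly tracks the case distinction in the maximum. It is worth emphasizing that the exponent $3/2$ in \eqref{1.6} is exactly what the method needs: the factor $r_0^{-1/2}$ inside the logarithm in $(a)$ halves the effective size of $\ln(1/r_0)$, so a decay rate $|\ln r|^{-\alpha}$ with $\alpha<3/2$ would not be absorbed by the threshold as $r_0\to0^+$.
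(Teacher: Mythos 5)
Your proposal is correct and follows essentially the same route as the paper: both arguments choose $r_0$ small enough that the maximum in condition $(a)$ reduces to $r_0^{-1/2}M_1$ and that $1+\ln(C_0r_0^{-1/2}M_1+1)\le\ln(1/r_0)$ (the paper encodes the latter as $C_0M_1r_0^{-1/2}+1<e^{-1}r_0^{-1}$), then invoke the monotonicity of $|\ln r|^{-3/2}$ and Theorem \ref{thm1.1}. The only difference is bookkeeping in how the smallness of $r_0$ is made explicit.
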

 Denote $$K(\varepsilon)=\exp\left(\sqrt{2\varepsilon^{-\frac{4}{3}}-1}-1\right),\ \ K_0(\varepsilon)=\exp(\varepsilon^{-\frac{2}{3}}-1)$$ for $0<\varepsilon\leq1$, then one can easily check that $$1+\ln K(\varepsilon)+\dfrac{1}{2}(\ln K(\varepsilon))^2=\varepsilon^{-\frac{4}{3}},\ \ \varepsilon^{\frac{4}{3}} K(\varepsilon)\geq \frac{K_0(\varepsilon)}{C_*}>0,$$ for some absolute
positive constant $C_*$ and $0<\varepsilon\leq1$. The use of the functions $K$ and $K_0$ is due to a new important observation in Lemma 2.3.

Throughout this paper, we assume $u\in C([0,T^*);H^2)$ to be the unique strong solution to the Navier-Stokes equations $(1.1)$ with initial value $u(0)=u_0$, and the maximal existence time
 $T^*>0$. We also assume $u$ to be axially
symmetric with $ u_r, u_{\theta}, u_z, \Gamma, \Omega, J$ defined above, and denote $$ \Gamma_0=\Gamma(0),\ \Omega_0=\Omega(0),\ J_0=J(0),\ \|\Gamma\|_{L^\infty}=\|\Gamma\|_{L^\infty(\mathbb{R}^3\times(0,T^*))}.$$ Due to the regularity of solutions to Navier-Stokes equations, $u\in C((0,T^*);H^4),\  $ and $$ \Omega,\ J\in C([0,T^*);L^{2})\cap C((0,T^*);H^{2}),$$  $$\dfrac{ u_{\theta}}{r},\ \dfrac{ u_{r}}{r}\in C((0,T^*);H^{3}), \ \partial_r
\dfrac{u_r}{r}\bigg|_{r=0}=0. $$ Hence, all calculations below are legal for $t\in (0,T^*)$. (see \cite{1} for more explaination)

Now let us recall some highlights on the study of the axially symmetric Navier-Stokes equations. If the  swirl
$u_{\theta} =0$, global regularity  result was  proved independently by Ukhovskii and Yudovich \cite{39},
and Ladyzhenskaya
\cite{24}, also \cite{26} for a refined proof. In the presence of swirl, the global regularity problem is still
open. Recently, tremendous efforts and interesting progress have been made on the regularity problem of the axially symmetric Navier-Stokes equations \cite{2}\cite{3}\cite{10}\cite{5}\cite{8}\cite{9}\cite{1}. There are many significant results under the sufficient conditions for regularity of axially symmetric solution of type $$\omega_{\theta}\in L^p(0,T;L^q(\mathbb{R}^3))\ \ \mbox{and}\ \
\dfrac{u_r}{r}\in L^p(0,T;L^q(\mathbb{R}^3)),\ \dfrac{2}{p}+\dfrac{3}{q}\leq2,\ \dfrac{3}{2}<q<+\infty
$$ in \cite{2}.
 It has been shown in \cite{5} that the axially symmetric solution is smooth in $\mathbb{R}^3\times(0,T]$ when $r^du_{\theta}\in L^p(0,T;L^q(\mathbb{R}^3))$ with
 $$
 d\in[0,1),\ (p,q)\in\left\{\left[\frac{2}{1-d},\infty\right]\times\left(\frac{3}{1-d},\infty\right],\dfrac{2}{p}+\dfrac{3}{q}\leq1-d\right\},
 $$
  in particular, global regularity is obtained if $|\Gamma|\leq Cr^{\alpha}$ for some $\alpha>0,\ C>0$. In \cite{1} global regularity is obtained if $|\Gamma|\leq C|\ln r|^{-2}$ for some $C>0$. Clearly, our Corollary 1.1 improves the one in \cite{1}.

Here global regularity means $T^*=+\infty$, and we only need to prove $\Omega\in L^\infty(0,T^*;L^2(\mathbb{R}^3)),$ hence we can use the results in \cite{5} or \cite{20} to obtain global regularity. We can also use Lemma 2.1 in section 2 to obtain $ \nabla \dfrac{ u_{r}}{r}\in\ L^\infty(0,T^*;L^2(\mathbb{R}^3)),$  and $\dfrac{ u_{r}}{r} \in\ L^\infty(0,T^*;L^6(\mathbb{R}^3)) ,$ then the results in \cite{2} or \cite{23} imply the global regularity.

This paper is organized as follows, in section 2 we will give some notations and 3 important Lemmas, in section 3 we will first follow the proof in \cite{1} then use the Lemmas in section 2 to conclude the proof.
\section{ Notations and Lemmas}

The Laplacian operator $\triangle$ and the
gradient operator $\nabla$ in the cylindrical coordinate are
$$\triangle=\partial_r^2+\frac{1}{r}\partial_r+\frac{1}{r^2}\partial_\theta
+\partial_z^2,\ \ \nabla=e_r\partial_r+\frac{e_\theta}{r}\partial_\theta+e_z\partial_z.$$
We will use $C$ to denote a generic absolute
positive constant whose meaning may change from line to line.
 If $|f|^2$ is axially
symmetric, we will denote
 $$\|f\|_{L^2}^2=\int|f|^2r\mathrm{d}r\mathrm{d}z,\ \ \mathrm{d}x=r\mathrm{d}r\mathrm{d}z.$$
 The following estimate will be used very often. (see \cite{8}\cite{12}\cite{19})
 \begin{lem}\label{lem2.1}
   $\left\|\nabla\dfrac{u_r}{r}\right\|_{L^2}^2\leq\|\Omega\|_{L^2}^2,\ \left\|\nabla^2\dfrac{u_r}{r}\right\|_{L^2}^2\leq\left\|\partial_z\Omega\right\|_{L^2}^2.$
\end{lem}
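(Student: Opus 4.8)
The plan is to establish the two estimates by integration by parts, exploiting the divergence-free condition $\partial_r(ru_r)+\partial_z(ru_z)=0$ and the identity $\omega_\theta=\partial_z u_r-\partial_r u_z$, together with the observation that $\Omega=\omega_\theta/r$. First I would write $\|\nabla(u_r/r)\|_{L^2}^2$ in cylindrical coordinates as $\int\left(|\partial_r(u_r/r)|^2+|\partial_z(u_r/r)|^2\right)r\,\mathrm{d}r\,\mathrm{d}z$. Setting $v=u_r/r$, the divergence-free condition becomes $\partial_r(r^2 v)+\partial_z(r^2 u_z)=0$, i.e. $\partial_r v + \tfrac{2}{r}v + \partial_z u_z + \tfrac{1}{r}\cdot(\text{lower order}) $ — more cleanly, $2rv + r^2\partial_r v + r^2\partial_z u_z = 0$, so $\partial_r v = -\tfrac{2}{r}v - \partial_z u_z$. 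The key point is that $\omega_\theta = \partial_z u_r - \partial_r u_z = r\,\partial_z v + v - \partial_r u_z$, and one can relate $\partial_r u_z$ back to $v$ and its derivatives via the incompressibility relation. The cleanest route is to integrate $|\nabla v|^2$ by parts, moving one derivative, and use $\Delta$-type identities so that the cross terms either vanish (using the boundary behavior $\partial_r v|_{r=0}=0$ stated in the excerpt, and decay at infinity from the $H^4$ regularity of $u$) or recombine into $\|\Omega\|_{L^2}^2$.

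More concretely, for the first inequality I would show the pointwise/integral identity
\[
\int |\nabla v|^2\, r\,\mathrm{d}r\,\mathrm{d}z + (\text{nonnegative boundary/remainder terms}) = \int |\Omega|^2\, r\,\mathrm{d}r\,\mathrm{d}z ,
\]
by expanding $\Omega = \omega_\theta/r = \partial_z v + \tfrac{v}{r} - \tfrac{\partial_r u_z}{r}$ wait — better: use $r\Omega = \partial_z u_r - \partial_r u_z$, substitute $u_r = rv$, and substitute for $u_z$ using the stream-function-type relation from incompressibility. Squaring $\Omega$ and integrating, the main terms reproduce $|\partial_z v|^2$ and $|\partial_r v|^2$ after integration by parts (the mixed second-derivative terms cancel against each other up to boundary terms that vanish by the stated condition $\partial_r\frac{u_r}{r}\big|_{r=0}=0$ and the decay at infinity), while the remaining terms are manifestly of one sign, giving the inequality. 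For the second inequality one applies $\partial_z$ to the governing relations (or to $v$ directly) and repeats the same integration-by-parts scheme: since $\partial_z$ commutes with all the coefficients (which depend only on $r$), the identity $\|\nabla^2 v\|_{L^2}^2 \leq \|\partial_z\Omega\|_{L^2}^2$ follows from $\|\nabla(\partial_z v)\|_{L^2}^2\leq\|\partial_z\Omega\|_{L^2}^2$ plus control of $\|\nabla\partial_r v\|_{L^2}$, which again reduces to $\|\partial_z\Omega\|$ via incompressibility.

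The main obstacle I anticipate is the careful bookkeeping of the boundary terms at $r=0$ and at infinity during the integrations by parts: one must verify that every term of the form $\big[\,r\,(\text{product of }v\text{-derivatives})\,\big]_{r=0}^{\infty}$ vanishes, which relies precisely on the regularity and symmetry facts quoted in the excerpt ($u\in C((0,T^*);H^4)$, $u_\theta/r,\ u_r/r\in C((0,T^*);H^3)$, and $\partial_r\frac{u_r}{r}|_{r=0}=0$). A secondary technical point is choosing the right combination of integrations by parts so that the indefinite-sign cross terms cancel exactly rather than merely being estimated — this is what upgrades a crude bound to the sharp constant $1$ in front of $\|\Omega\|_{L^2}^2$ and $\|\partial_z\Omega\|_{L^2}^2$. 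Since this identity is classical and attributed to \cite{8}\cite{12}\cite{19}, I would present the computation compactly, referencing those sources for the routine details.
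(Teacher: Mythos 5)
Your toolkit (stream function, incompressibility, integration by parts, boundary terms at $r=0$) is the right one, but the plan as written is missing the one identity that makes the lemma work, and without it neither inequality goes through in the form you describe. The paper's proof rests on the elliptic relation
$$\left(\triangle+\frac{2}{r}\partial_r\right)\frac{u_r}{r}=\partial_z\Omega,$$
obtained by writing $u_r=-\partial_z\psi_\theta$, $u_z=\frac{1}{r}\partial_r(r\psi_\theta)$ and computing $-\left(\triangle+\frac{2}{r}\partial_r\right)\frac{\psi_\theta}{r}=\Omega$. Once this is in hand, the first inequality is \emph{not} an ``identity with a nonnegative remainder'': it is a duality/Cauchy--Schwarz argument. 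Testing the relation against $u_r/r$ gives
$$\left\|\nabla\frac{u_r}{r}\right\|_{L^2}^2\le-\int\frac{u_r}{r}\,\partial_z\Omega\,\mathrm{d}x=\int\partial_z\frac{u_r}{r}\,\Omega\,\mathrm{d}x\le\left\|\nabla\frac{u_r}{r}\right\|_{L^2}\|\Omega\|_{L^2},$$
the discarded boundary term $\int\left|\frac{u_r}{r}(0,z,t)\right|^2\mathrm{d}z$ having the good sign. Your sketch never arrives at a closed expression relating $\Omega$ to $v=u_r/r$ alone --- you still have $\partial_r u_z$ floating around and you visibly hesitate about how to eliminate it --- so the cancellations you assert cannot be checked, and your algebra for the incompressibility constraint ($\partial_z(ru_z)=r\partial_z u_z$, not $\partial_z(r^2u_z)$) is already off.

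The second inequality is where your route would fail quantitatively. You propose to bound $\|\nabla\partial_z v\|_{L^2}$ and $\|\nabla\partial_r v\|_{L^2}$ separately by $\|\partial_z\Omega\|_{L^2}$; even if both bounds held, summing their squares gives $\|\nabla^2 v\|_{L^2}^2\le 2\|\partial_z\Omega\|_{L^2}^2$, not the stated constant $1$. The paper instead expands
$$\|\partial_z\Omega\|_{L^2}^2=\left\|\left(\triangle+\frac{2}{r}\partial_r\right)\frac{u_r}{r}\right\|_{L^2}^2=\left\|\triangle\frac{u_r}{r}\right\|_{L^2}^2+4\left\|\frac{1}{r}\partial_r\frac{u_r}{r}\right\|_{L^2}^2+4\int\triangle\frac{u_r}{r}\cdot\frac{1}{r}\partial_r\frac{u_r}{r}\,\mathrm{d}x$$
and shows the cross term is nonnegative (it equals $\left\|\frac{1}{r}\partial_r\frac{u_r}{r}\right\|_{L^2}^2$ plus a boundary integral that is controlled using $\partial_r\frac{u_r}{r}\big|_{r=0}=0$), whence $\left\|\nabla^2\frac{u_r}{r}\right\|_{L^2}^2=\left\|\triangle\frac{u_r}{r}\right\|_{L^2}^2\le\|\partial_z\Omega\|_{L^2}^2$. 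You correctly identify ``exact cancellation of the indefinite-sign cross terms'' as the crux, but you supply no mechanism for it; the mechanism is precisely the expansion of the square of the operator $\triangle+\frac{2}{r}\partial_r$ applied to $u_r/r$, and that is the step your plan is missing.
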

\begin{proof}[\bf Proof.]\  By virtue of $\partial_r (ru_r)+\partial_z (ru_z)=0$, we can find the stream function $\psi_{\theta}$ such that $$u_r=-\partial_z\psi_{\theta},\ u_z=\dfrac{1}{r}\partial_r(r\psi_{\theta}),$$ then we can compute $$-\left(\triangle+\dfrac{2}{r}\partial_r\right)\dfrac{\psi_{\theta}}{r}=\Omega,\ \left(\triangle+\dfrac{2}{r}\partial_r\right)\dfrac{u_r}{r}=\partial_z\Omega.$$
 Using integration by parts, we have
  $$
  -\dint \dfrac{u_r}{r}\left(\triangle+\frac{2}{r}\partial_r\right)\dfrac{u_r}{r}\mathrm{d}x
=\left\|\nabla \dfrac{u_r}{r}\right\|_{L^2}^2+\int\left|\dfrac{u_r}{r}(0,z,t)\right|^2\mathrm{d}z,
$$ therefore,
 $$\ba{c}\left\|\nabla \dfrac{u_r}{r}\right\|_{L^2}^2\leq-\dint \dfrac{u_r}{r}\left(\triangle+\frac{2}{r}\partial_r\right)\dfrac{u_r}{r}\mathrm{d}x
\\[3mm]=-\dint \dfrac{u_r}{r}\partial_z\Omega\mathrm{d}x=\dint \partial_z\dfrac{u_r}{r}\Omega\mathrm{d}x\leq\left\|\nabla \dfrac{u_r}{r}\right\|_{L^2}\left\|\Omega\right\|_{L^2},\ea $$
 hence,
$$ \left\|\nabla \dfrac{u_r}{r}\right\|_{L^2}\leq\|\Omega\|_{L^2}.$$ Since
$$
 \ba{c}\left\|\partial_z\Omega\right\|_{L^2}^2=\left\|\left(\triangle+\dfrac{2}{r}\partial_r\right)
\dfrac{u_r}{r}\right\|_{L^2}^2\\[4mm]=\left\|\triangle
\dfrac{u_r}{r}\right\|_{L^2}^2+4\left\|\dfrac{1}{r}\partial_r
\dfrac{u_r}{r}\right\|_{L^2}^2+4\dint\triangle
\dfrac{u_r}{r}\dfrac{1}{r}\partial_r
\dfrac{u_r}{r}r\mathrm{d}r\mathrm{d}z,\ea
$$ and
$$\left\|\triangle
\dfrac{u_r}{r}\right\|_{L^2}^2=\left\|\nabla^2
\dfrac{u_r}{r}\right\|_{L^2}^2,\ \ \ \partial_r
\dfrac{u_r}{r}\bigg|_{r=0}=0,$$
$$\int\triangle
\dfrac{u_r}{r}\dfrac{1}{r}\partial_r
\dfrac{u_r}{r}r\mathrm{d}r\mathrm{d}z=\left\|\dfrac{1}{r}\partial_r
\dfrac{u_r}{r}\right\|_{L^2}^2+\frac{1}{2}\int\left(\left|\partial_z
\dfrac{u_r}{r}\right|^2-\left|\partial_r
\dfrac{u_r}{r}\right|^2\right)(0,z,t)\mathrm{d}z\geq 0,
$$
 we can obtain $ \ \left\|\nabla^2
\dfrac{u_r}{r}\right\|_{L^2}^2\leq\left\|\partial_z\Omega\right\|_{L^2}^2,$
this completes the proof Lemma \ref{lem2.1}.\end{proof}

Denote $$v(r,z,t)=\dint_0^r|u_\theta(r',z,t)|\mathrm{d}r',\ \mathrm{ for}  \ r>0,\ \ a(t)=\left\|\dfrac{v}{r}(t)\right\|_{L^\infty},$$ then we have the following inequality
\begin{lem}\label{lem2.2}
   $a(t)^2\leq\|J(t)\|_{L^2}\left\|\dfrac{u_\theta}{r}(t)\right\|_{L^2}$.
\end{lem}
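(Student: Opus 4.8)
The plan is to prove the pointwise bound
$v(r,z,t)/r \le \|J(t)\|_{L^2}^{1/2}\big\|\tfrac{u_\theta}{r}(t)\big\|_{L^2}^{1/2}$ for every $(r,z)$ and then take the supremum. The starting observation is the identity $\partial_z u_\theta=-rJ$, which turns the $L^2$ norm of $J$ into a weighted slice norm of $\partial_z u_\theta$: namely $\|J(t)\|_{L^2}^2=\int\int \frac{|\partial_z u_\theta|^2}{r}\,\mathrm dr\,\mathrm dz$ and, trivially, $\big\|\frac{u_\theta}{r}(t)\big\|_{L^2}^2=\int\int\frac{|u_\theta|^2}{r}\,\mathrm dr\,\mathrm dz$. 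Introducing the one–dimensional quantities $A(r'):=\int_{\mathbb R}|u_\theta(r',z',t)|^2\,\mathrm dz'$ and $B(r'):=\int_{\mathbb R}|\partial_{z'}u_\theta(r',z',t)|^2\,\mathrm dz'$, the two target norms become $\int_0^\infty \frac{A(r')}{r'}\,\mathrm dr'$ and $\int_0^\infty \frac{B(r')}{r'}\,\mathrm dr'$ respectively.

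First I would estimate $|u_\theta(r',z,t)|$ slicewise in $z$: since $u_\theta(r',\cdot,t)$ vanishes at $z=\pm\infty$, one writes $u_\theta(r',z,t)^2=\int_{-\infty}^{z}2u_\theta\,\partial_{z'}u_\theta\,\mathrm dz'$, whence $u_\theta(r',z,t)^2\le 2\int_{\mathbb R}|u_\theta\,\partial_{z'}u_\theta|\,\mathrm dz'\le 2A(r')^{1/2}B(r')^{1/2}$ by Cauchy–Schwarz in $z'$, i.e. $|u_\theta(r',z,t)|\le \sqrt2\,A(r')^{1/4}B(r')^{1/4}$ uniformly in $z$.

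Next I would integrate this in $r'$ over $(0,r)$ and apply Cauchy–Schwarz twice: once to split $\int_0^r A^{1/4}B^{1/4}\,\mathrm dr'\le\big(\int_0^r A^{1/2}\,\mathrm dr'\big)^{1/2}\big(\int_0^r B^{1/2}\,\mathrm dr'\big)^{1/2}$, and once more through the weight split $A^{1/2}=(A/r')^{1/2}(r')^{1/2}$ together with $\int_0^r r'\,\mathrm dr'=r^2/2$, which gives $\int_0^r A^{1/2}\,\mathrm dr'\le \frac{r}{\sqrt2}\big(\int_0^r \frac{A}{r'}\,\mathrm dr'\big)^{1/2}$ and likewise for $B$. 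Chaining these inequalities (and enlarging the $r'$-integrals to $(0,\infty)$) yields $v(r,z,t)\le r\,\big(\int_0^\infty\frac{A}{r'}\,\mathrm dr'\big)^{1/4}\big(\int_0^\infty\frac{B}{r'}\,\mathrm dr'\big)^{1/4}=r\,\big\|\frac{u_\theta}{r}(t)\big\|_{L^2}^{1/2}\|J(t)\|_{L^2}^{1/2}$; dividing by $r$ and taking $\sup_{(r,z)}$ finishes the argument, in fact producing the constant $1$ on the nose.

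I expect the only genuine subtlety to be the slicewise step, where one needs $u_\theta(r',\cdot,t)\in H^1(\mathbb R)$ with limit $0$ at infinity for a.e. $r'$, so that the fundamental-theorem-of-calculus representation is legitimate and $A(r'),B(r')$ are finite for a.e. $r'$; this is guaranteed by the regularity $u\in C((0,T^*);H^4)$ recalled above together with Fubini's theorem. Everything else is elementary: two applications of Cauchy–Schwarz and the identity $\int_0^r r'\,\mathrm dr'=r^2/2$.
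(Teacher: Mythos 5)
Your proof is correct and gives the stated inequality with the same constant $1$, but it takes a genuinely different route from the paper's. The paper first applies a weighted Cauchy--Schwarz in $r$ to $v(r',z')=\int_0^{r'}|u_\theta|\,\mathrm{d}r$, splitting the integrand as $r^{1/2}\cdot|u_\theta|r^{-1/2}$ to get $|v|^2\le \frac{r'^2}{2}\int_0^{r'}\frac{|u_\theta|^2}{r}\,\mathrm{d}r$; it then writes $\frac{|u_\theta|^2}{r}(r,z')=\int_{z'}^{+\infty}2Ju_\theta\,\mathrm{d}z$ by the fundamental theorem of calculus in $z$ (using $J=-\partial_z u_\theta/r$), and finishes with a single Cauchy--Schwarz over the full measure $r\,\mathrm{d}r\,\mathrm{d}z$, so the product $Ju_\theta$ appears explicitly. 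You instead apply the fundamental theorem of calculus in $z$ first, in the guise of the one-dimensional Agmon inequality on each slice $r'=\mathrm{const}$, which decouples the two norms already at the pointwise level as $|u_\theta|\le\sqrt2\,A^{1/4}B^{1/4}$, and then recombine with two further Cauchy--Schwarz inequalities in $r$. Both arguments use exactly the same two ingredients --- integration in $z$ from infinity, and the weight split $1=r^{1/2}\cdot r^{-1/2}$ against $\int_0^{r}r'\,\mathrm{d}r'=r^2/2$ --- just in the opposite order; yours is a step longer but makes the interpolation structure $a(t)^2\le\|J\|_{L^2}\|u_\theta/r\|_{L^2}$ transparent as a product of two independent slice estimates, while the paper's is more compact because the identity $-\partial_z(|u_\theta|^2/r)=2Ju_\theta$ does the decoupling in one stroke. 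Your attention to the vanishing of $u_\theta(r',\cdot,t)$ at $z=\pm\infty$ is the same hypothesis the paper uses implicitly in its own FTC step, so there is no gap.
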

\begin{proof}[\bf Proof.]\ For $r'>0,\ z'\in\mathbb{R}, \ t>0$ as $ v(r',z',t)=\dint_0^{r'}|u_\theta(r,z',t)|\mathrm{d}r,$ by $\mathrm{H\ddot{o}lder}$ inequality  we have
$$\ba{ll} |v(r',z',t)|^2&\leq\dint_0^{r'}r\mathrm{d}r\int_0^{r'}\dfrac{|u_\theta(r,z',t)|^2}
{r}\mathrm{d}r
\\[3mm]&=\dfrac{r'^2}{2}\int_0^{r'}\mathrm{d}r\int_z^{+\infty}-\partial_z
\dfrac{|u_\theta(r,z,t)|^2}
{r}\mathrm{d}z\\[3mm]&
=\dfrac{r'^2}{2}\int_0^{r'}\mathrm{d}r\int_z^{+\infty}
2Ju_\theta(r,z,t)\mathrm{d}z\\[3mm]&
\leq r'^2\dint|J|\left|\dfrac{u_\theta}
{r}\right|(r,z,t)r\mathrm{d}r\mathrm{d}z
\\[3mm]& \leq r'^2\|J(t)\|_{L^2}\left\|\dfrac{u_\theta}{r}(t)\right\|_{L^2}.
\ea$$
Hence, we get $$\ a(t)^2=\sup\limits_{r'>0,z'\in\mathbb{R}}\left|\dfrac{v(r',z',t)}{r'}\right|^2\leq
\|J(t)\|_{L^2}\left\|\dfrac{u_\theta}{r}(t)\right\|_{L^2}.$$ This completes the proof.\end{proof}

\begin{lem}\label{lem2.3} Assume that $t>0,\ \|\Gamma\|_{L^\infty(r\leq r_1)}\leq \varepsilon\leq1,$ and $\ 0<r_1\leq\dfrac{\varepsilon K(\varepsilon)}{a(t)}$, then
   \begin{equation}\label{2.3}
   \int\dfrac{|u_\theta(t)|}{r}|f|^2\mathrm{d}x\leq \varepsilon^{-\frac{1}{3}}\int|\partial_rf|^2\mathrm{d}x
   +C\frac{\|\Gamma\|_{L^\infty}+\varepsilon^{-\frac{1}{3}}}{r_1^2}
   \int_{r\geq\frac{r_1}{2}}|f|^2\mathrm{d}x,\end{equation}
   \begin{equation}\label{2.4}
   \int |u_\theta(t)|^2|f|^2\mathrm{d}x\leq \varepsilon^{\frac{2}{3}}\int|\partial_rf|^2\mathrm{d}x
   +C\frac{\|\Gamma\|_{L^\infty}^2+\varepsilon^{\frac{2}{3}}}{r_1^2}
   \int_{r\geq\frac{r_1}{2}}|f|^2\mathrm{d}x,\end{equation}
 for all axially symmetric scalar and vector
functions $f\in H^1$.\end{lem}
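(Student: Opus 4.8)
The plan is to deduce $(2.4)$ from $(2.3)$, and to prove $(2.3)$ by splitting space into $\{r\le r_1\}$ and $\{r\ge r_1\}$, integrating by parts in $r$ on the inner region, and exploiting a sharp logarithmic bound on the radial primitive $v$ of $|u_\theta|$.

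\emph{Reduction of $(2.4)$ to $(2.3)$.} On $\{r\ge r_1\}$ we have $|u_\theta|^2=|\Gamma|^2/r^2\le\|\Gamma\|_{L^\infty}^2/r_1^2$. On $\{r\le r_1\}$, since $|\Gamma|\le\varepsilon$ there, $|u_\theta|^2=\frac{|\Gamma|}{r}\,|u_\theta|\le\varepsilon\,\frac{|u_\theta|}{r}$, so $\int_{r\le r_1}|u_\theta|^2|f|^2\,dx$ is at most $\varepsilon$ times the right–hand side of $(2.3)$. Adding the two contributions and using $\varepsilon\|\Gamma\|_{L^\infty}\le\frac12(\varepsilon^2+\|\Gamma\|_{L^\infty}^2)$ together with $\varepsilon^2\le\varepsilon^{2/3}$ (for $\varepsilon\le1$) gives $(2.4)$. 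So it suffices to prove $(2.3)$.

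\emph{Set-up for $(2.3)$.} On $\{r\ge r_1\}$, $\frac{|u_\theta|}{r}=\frac{|\Gamma|}{r^2}\le\frac{\|\Gamma\|_{L^\infty}}{r_1^2}$, which already produces the $\|\Gamma\|_{L^\infty}$–part of the right–hand side. For the inner region write $\int_{r\le r_1}\frac{|u_\theta|}{r}|f|^2\,dx=\int\!\!\int_0^{r_1}|u_\theta(r,z)|\,|f(r,z)|^2\,dr\,dz$ and, since $|u_\theta|=\partial_r v$, integrate by parts in $r$: the endpoint $r=0$ gives nothing because $v(0,z)=0$, the endpoint $r=r_1$ gives $\int v(r_1,z)|f(r_1,z)|^2\,dz$, and what remains is $-\int\!\!\int_0^{r_1}v\,\partial_r|f|^2\,dr\,dz$. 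The argument then rests on two elementary bounds for $v$ on $\{0<r\le r_1\}$: first $v(r,z)\le a(t)\,r$, straight from the definition of $a(t)$; second, when $ra(t)\ge\varepsilon$, $v(r,z)\le\varepsilon\bigl(1+\ln\tfrac{ra(t)}{\varepsilon}\bigr)$, obtained by cutting the $s$–integral defining $v$ at $s=\varepsilon/a(t)$, estimating the part over $[0,\varepsilon/a(t)]$ by $a(t)\cdot\varepsilon/a(t)=\varepsilon$ and the part over $[\varepsilon/a(t),r]$ by $\varepsilon\int_{\varepsilon/a(t)}^{r}\tfrac{ds}{s}$ via $|u_\theta(s,z)|=|\Gamma(s,z)|/s\le\varepsilon/s$. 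Inserting these into $\int_0^{r_1}\frac{v(s,z)}{s}\,ds$ and using $r_1a(t)\le\varepsilon K(\varepsilon)$ yields the key estimate — this is the new observation referred to in the paper —
$$\int_0^{r_1}\frac{v(s,z)}{s}\,ds\le\varepsilon\Bigl(1+\ln K(\varepsilon)+\tfrac12\bigl(\ln K(\varepsilon)\bigr)^2\Bigr)=\varepsilon\cdot\varepsilon^{-4/3}=\varepsilon^{-1/3},$$
the middle equality being exactly the defining identity of $K$. In particular, since $v(\cdot,z)$ is nondecreasing, $v(r,z)\le\varepsilon(1+\ln K(\varepsilon))=:L\le\varepsilon^{-1/3}$ on $\{r\le r_1\}$ and $\int_0^{r_1}\frac{v(s,z)^2}{s}\,ds\le v(r_1,z)\int_0^{r_1}\frac{v(s,z)}{s}\,ds\le L\,\varepsilon^{-1/3}\le\sqrt2$.

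\emph{Main term and the hard point.} Young's inequality with weight $\varepsilon^{-1/3}r$ gives
$$\Bigl|\int_0^{r_1}v\,\partial_r|f|^2\,dr\Bigr|\le 2\int_0^{r_1}v\,|f|\,|\partial_r f|\,dr\le\varepsilon^{-1/3}\int_0^{r_1}r|\partial_r f|^2\,dr+\varepsilon^{1/3}\int_0^{r_1}\frac{v^2}{r}|f|^2\,dr ,$$
and after integrating in $z$ the first term is $\le\varepsilon^{-1/3}\int|\partial_r f|^2\,dx$, which is exactly the leading term of $(2.3)$. What is left — and this is the delicate part — is to bound $\varepsilon^{1/3}\int\!\!\int_0^{r_1}\frac{v^2}{r}|f|^2$ together with the boundary term $\int v(r_1,z)|f(r_1,z)|^2\,dz$ by $C\,\tfrac{\varepsilon^{-1/3}}{r_1^2}\int_{r\ge r_1/2}|f|^2\,dx$ plus a controlled multiple of $\int|\partial_r f|^2\,dx$. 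I would handle this by transferring the inner values of $|f|$ outward via $|f(r,z)|^2\le(1+\eta)|f(r',z)|^2+(1+\eta^{-1})\ln\tfrac{r'}{r}\int_r^{r'}s|\partial_s f|^2\,ds$ (Cauchy–Schwarz), averaging $r'$ over $[r_1,2r_1]$, and using the weight bounds $\int_0^{r_1}\frac{v^2}{r}\,dr\le\sqrt2$ and $\int_0^{r_1}\frac{v^2}{r}\ln\tfrac{2r_1}{r}\,dr\le C\varepsilon^{-2/3}$ (again consequences of the two bounds on $v$ and the identity for $K$), together with the one–dimensional trace inequality $\int|f(r_1,z)|^2\,dz\le\tfrac{2}{r_1^2}\int_{r_1<r<2r_1}|f|^2\,dx+\int_{r_1<r<2r_1}|\partial_r f|^2\,dx$ and $v(r_1,z)\le L$. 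The crux is that these transfer and trace steps generate Dirichlet contributions carrying absolute constants, so the estimate must be tuned so that, together with the single Young step above, the total coefficient of $\int|\partial_r f|^2\,dx$ remains $\varepsilon^{-1/3}$; this is possible precisely because $L\sim\varepsilon^{1/3}$ is small while $\varepsilon^{-1/3}$ is large, and because the profile of $K$ — not merely its growth — was chosen so that $\int_0^{r_1}\tfrac{v}{s}\,ds$ is no worse than $\varepsilon^{-1/3}$ rather than the crude $\varepsilon^{-4/3}$. Everything else is routine bookkeeping.
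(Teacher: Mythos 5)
Your reduction of \eqref{2.4} to \eqref{2.3} and your treatment of the region $\{r\ge r_1/2\}$ are fine and match the paper. The problem is in the inner region of \eqref{2.3}, precisely at the point you yourself flag as ``the delicate part.'' After integrating by parts and applying Young's inequality with weight $\theta\varepsilon^{-1/3}$, you are left with $\theta^{-1}\varepsilon^{1/3}\int_0^{r_1}\frac{v^2}{r}|f|^2\,\mathrm{d}r$, and transferring $|f(r)|^2$ out to $r'\sim r_1$ costs a Dirichlet term with coefficient $\theta^{-1}\varepsilon^{1/3}(1+\eta^{-1})\int_0^{r_1}\frac{v^2}{r}\ln\frac{2r_1}{r}\,\mathrm{d}r$. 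In the worst case ($r_1=\varepsilon K(\varepsilon)/a(t)$ and $v$ saturating the bound $\varepsilon(1+\ln\frac{ra(t)}{\varepsilon})$), a direct computation with $s=\ln\frac{ra(t)}{\varepsilon}$ gives $\int_0^{r_1}\frac{v^2}{r}\ln\frac{2r_1}{r}\,\mathrm{d}r\approx\frac{\varepsilon^2(1+\ln K)^4}{12}\approx\frac{\varepsilon^{-2/3}}{3}$, so the total coefficient of $\int|\partial_rf|^2\,\mathrm{d}x$ is at best $\bigl(\theta+\frac{1}{3\theta}\bigr)\varepsilon^{-1/3}\ge\frac{2}{\sqrt3}\,\varepsilon^{-1/3}$. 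Your claim that the tuning can keep the coefficient at exactly $\varepsilon^{-1/3}$ is therefore false: Young's inequality is not saturated uniformly in $r$, and the loss is a fixed constant $>1$, not something that shrinks with $\varepsilon$. This matters because the lemma is used in \eqref{3.4}--\eqref{3.6} with the constant exactly $1$ in front of $\varepsilon^{-1/3}$, so that $\varepsilon^{2/3}\|\partial_r\Omega\|_{L^2}^2+\varepsilon^{2/3}\|\partial_z\Omega\|_{L^2}^2$ is absorbed by exactly $\varepsilon^{2/3}\|\nabla\Omega\|_{L^2}^2$; a factor $2/\sqrt3$ breaks that absorption (it could be repaired by redefining $K$, but that is a different lemma).

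The paper avoids this loss entirely, and more simply. For $f$ vanishing at $r=r_1$ it does not integrate by parts at all: it writes $f(r',z)=-\int_{r'}^{r_1}\partial_rf\,\mathrm{d}r$ and applies Cauchy--Schwarz to get $|f(r',z)|^2\le\bigl(\int_{r'}^{r_1}r|\partial_rf|^2\,\mathrm{d}r\bigr)\int_{r'}^{r_1}\frac{\mathrm{d}r}{r}$, then integrates against $|u_\theta(r',z,t)|\,\mathrm{d}r'$ and uses Fubini to identify the resulting weight as $\int_0^{r_1}\frac{v(r,z,t)}{r}\,\mathrm{d}r$, which your own key estimate bounds by $\varepsilon^{-1/3}$ with no constant. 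The passage to general $f$ is then a routine radial cutoff $\phi(r/r_1)$ equal to $1$ on $r\le r_1/2$: the commutator terms are supported in $\{r\ge r_1/2\}$ and go directly into the $C\varepsilon^{-1/3}r_1^{-2}\int_{r\ge r_1/2}|f|^2\,\mathrm{d}x$ term, with no trace inequality and no transfer argument needed. In short: your logarithmic bound on $v$ and the identity for $K$ are exactly right, but they must be fed into the pointwise Hardy-type estimate for $|f|^2$ rather than into an integration-by-parts/Young scheme, which cannot produce the sharp constant the lemma (and its application) requires.
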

\begin{proof}[\bf Proof.]\ We first prove that if $f=0$ for $r\geq r_1$, then
\begin{equation}\label{2.5}
    \int\dfrac{|u_\theta(t)|}{r}|f|^2r\mathrm{d}r\mathrm{d}z\leq \varepsilon^{-\frac{1}{3}}\int|\partial_rf|^2r\mathrm{d}r\mathrm{d}z.
\end{equation}
In this case, $f(r',z)=-\dint_{r'}^{r_1}\partial_rf(r,z)\mathrm{d}r$ for $0<r'<r_1,$ by $\mathrm{H\ddot{o}lder}$ inequality  we have
$$|f(r',z)|^2\leq\dint_{r'}^{r_1}r|\partial_rf(r,z)|^2\mathrm{d}r
\int_{r'}^{r_1}\frac{\mathrm{d}r}{r}.
$$
Consequently,
\begin{equation}\label{2.6}
    \ \int|u_\theta(r',z,t)||f(r',z)|^2\mathrm{d}r'
    \leq\int r|\partial_rf(r,z)|^2\mathrm{d}r\int_0^{r_1}|u_\theta(r',z,t)|
    \int_{r'}^{r_1}\frac{\mathrm{d}r}{r}\mathrm{d}r',
\end{equation}
by the definition of $v$ we have \begin{equation}\label{2.7}
    \int_0^{r_1}|u_\theta(r',z,t)|
    \int_{r'}^{r_1}\frac{\mathrm{d}r}{r}\mathrm{d}r'=
    \int_0^{r_1}v(r,z,t)\frac{\mathrm{d}r}{r},
\end{equation}
by the definition of $a(t)$ we have $v(r,z,t)\leq ra(t).$ On the other hand, $|u_\theta|=\dfrac{|\Gamma|}{r}\leq \dfrac{\varepsilon}{r}$ for $0<r<r_1.$ Hence, if $\dfrac{\varepsilon}{a(t)}\leq r\leq r_1$, then
$$\ba{l}
v(r,z,t)=v\left(\dfrac{\varepsilon}{a(t)},z,t\right)+\dint_{\frac{\varepsilon}{a(t)}}^r
|u_\theta(r',z,t)|\mathrm{d}r'\\[4mm]\leq\dfrac{\varepsilon}{a(t)}a(t)+
\int_{\frac{\varepsilon}{a(t)}}^r\dfrac{\varepsilon}{r'}\mathrm{d}r'=
\varepsilon\left(1+\ln\frac{ra(t)}{\varepsilon}\right).\ea
$$ The above estimates of $v$ implies\begin{equation}\label{2.8}
    \ba{l}
    \dint_0^{r_1}v(r,z,t)\dfrac{\mathrm{d}r}{r}\leq
    \int_0^{\frac{\varepsilon}{a(t)}}\dfrac{ra(t)}{r}\mathrm{d}r+
    \int_{\frac{\varepsilon}{a(t)}}^{\frac{\varepsilon K(\varepsilon)}{a(t)}}\varepsilon\left(1+\ln\dfrac{ra(t)}{\varepsilon}\right)
    \frac{\mathrm{d}r}{r}\\=\varepsilon+\dint_1^{K(\varepsilon)}
    \varepsilon(1+\ln r)\frac{\mathrm{d}r}{r}=\varepsilon\left(1+\ln K(\varepsilon)+\dfrac{1}{2}\left(\ln K(\varepsilon)\right)^2\right)=\varepsilon^{-\frac{1}{3}},\ea
\end{equation} here we used $0<r_1\leq\dfrac{\varepsilon K(\varepsilon)}{a(t)}.$  By $\eqref{2.6}$, $\eqref{2.7}$, $\eqref{2.8}$, we have
$$\int|u_\theta(r',z,t)||f(r',z)|^2\mathrm{d}r'
    \leq\varepsilon^{-\frac{1}{3}}\int r|\partial_rf|^2\mathrm{d}r,$$
    integrate in $z$, we obtain $\eqref{2.5}$. Now we discuss general $f$. Take a smooth cut-off function of $r$ such that $(i)\ \ \phi'\leq 0,\ \ (ii)\ \ \phi\equiv1$ if $0\leq r\leq\dfrac{1}{2}$, $\ \ (iii)\ \ \phi\equiv0$ if $ r\geq1$. Then we have
    $$
    \ba{l}\ \ \dint\dfrac{|u_\theta(t)|}{r}|f|^2r\mathrm{d}r\mathrm{d}z=
    \int|u_\theta(t)|\left|\phi\left(\frac{r}{r_1}\right)f\right|^2
    \mathrm{d}r\mathrm{d}z+\int|u_\theta(t)|\left(1-\phi\left(\frac{r}{r_1}\right)^2\right)|f|^2
    \mathrm{d}r\mathrm{d}z\\[3mm]
    \leq\varepsilon^{-\frac{1}{3}}
    \dint\left|\partial_r\left[\phi\left(\frac{r}{r_1}\right)f\right]\right|^2r\mathrm{d}r\mathrm{d}z+
    \int_{r\geq\frac{r_1}{2}}\frac{|\Gamma|}{r}|f|^2\mathrm{d}r\mathrm{d}z\\[2mm]
    \leq\varepsilon^{-\frac{1}{3}}\left(\dint
    |\partial_rf|^2r\mathrm{d}r\mathrm{d}z+\frac{C}{r_1^2}\int_{r\geq\frac{r_1}{2}}
    |f|^2r\mathrm{d}r\mathrm{d}z\right)+\dfrac{4\|\Gamma\|_{L^\infty}}
    {r_1^2}\dint_{r\geq\frac{r_1}{2}}
    |f|^2r\mathrm{d}r\mathrm{d}z\\[2mm]\leq\varepsilon^{-\frac{1}{3}}
    \dint|\partial_rf|^2r\mathrm{d}r\mathrm{d}z
   +C\frac{\|\Gamma\|_{L^\infty}+\varepsilon^{-\frac{1}{3}}}{r_1^2}
   \int_{r\geq\frac{r_1}{2}}|f|^2r\mathrm{d}r\mathrm{d}z,\ea
   $$
   here we used $\eqref{2.5}$ and the fact that
   $$
   \ba{l}\dint\left|\partial_r\left[\phi\left(\frac{r}{r_1}\right)f\right]\right|^2r\mathrm{d}r\mathrm{d}z
   =\int\Bigg[\phi\left(\frac{r}{r_1}\right)^2|\partial_rf|^2+|f|^2
   \left|\partial_r\phi\left(\frac{r}{r_1}\right)\right|^2\\
  \hskip 4cm +\partial_r|f|^2
   \phi\left(\dfrac{r}{r_1}\right)\partial_r\phi\left(\dfrac{r}{r_1}\right)\Bigg]r\mathrm{d}r\mathrm{d}z\\[3mm]
   =\dint\left[\phi\left(\frac{r}{r_1}\right)^2|\partial_rf|^2+|f|^2
   \left|\partial_r\phi\left(\frac{r}{r_1}\right)\right|^2\right]r\mathrm{d}r\mathrm{d}z-\int|f|^2
   \partial_r\left[\phi\left(\frac{r}{r_1}\right)\partial_r\phi\left(\frac{r}{r_1}\right)r\right]
   \mathrm{d}r\mathrm{d}z\\
   \leq\dint|\partial_rf|^2+\frac{C}{r_1^2}\int_{r\geq\frac{r_1}{2}}
    |f|^2r\mathrm{d}r\mathrm{d}z.\ea
    $$
    Similarly we have
    $$
    \ba{l}\ \ \dint|u_\theta(t)|^2|f|^2r\mathrm{d}r\mathrm{d}z=
    \int|u_\theta(t)|^2\left|\phi\left(\frac{r}{r_1}\right)f\right|^2r
    \mathrm{d}r\mathrm{d}z
    +\int|u_\theta(t)|^2\left(1-\phi\left(\frac{r}{r_1}\right)^2\right)|f|^2
    r\mathrm{d}r\mathrm{d}z\\[3mm]
    \leq\|\Gamma\|_{L^\infty(r\leq r_1)}\dint|u_\theta(t)|\left|\phi\left(\frac{r}{r_1}\right)f\right|^2
    \mathrm{d}r\mathrm{d}z+
    \int_{r\geq\frac{r_1}{2}}\frac{|\Gamma|^2}{r^2}|f|^2
    r\mathrm{d}r\mathrm{d}z\\[2mm]
    \leq\varepsilon\varepsilon^{-\frac{1}{3}}\left(\dint
    |\partial_rf|^2r\mathrm{d}r\mathrm{d}z+
    \frac{C}{r_1^2}\int_{r\geq\frac{r_1}{2}}
    |f|^2r\mathrm{d}r\mathrm{d}z\right)+\dfrac{4\|\Gamma\|_{L^\infty}^2}
    {r_1^2}\int_{r\geq\frac{r_1}{2}}
    |f|^2r\mathrm{d}r\mathrm{d}z\\[2mm]
    \leq\varepsilon^{\frac{2}{3}}
    \dint|\partial_rf|^2r\mathrm{d}r\mathrm{d}z
   +C\frac{\|\Gamma\|_{L^\infty}^2+\varepsilon^{\frac{2}{3}}}{r_1^2}
   \int_{r\geq\frac{r_1}{2}}|f|^2r\mathrm{d}r\mathrm{d}z.\ea
   $$
 This completes the proof. \end{proof}

\section{  Proof of the results}

\begin{proof}[\bf Proof of  Theorem\ 1.1.]\  By applying standard energy estimate to $J$ equation, we have $$\frac{1}{2}\frac{\mathrm{d}}{\mathrm{d}t}\|J\|_{L^2}^2+\int J(u\cdot\nabla) J\mathrm{d}x-\int J\left(\triangle+\frac{2}{r}\partial_r\right)J\mathrm{d}x-\int J(\omega_r \partial_r+\omega_z \partial_z )\frac{u_r}{r}\mathrm{d}x=0.$$
Using $\nabla\cdot u=0,$ one has
$$\dint J(u\cdot\nabla) J\mathrm{d}x=\frac{1}{2}
\int J^2(\nabla\cdot u) \mathrm{d}x=0.$$ On the other hand, by direct calculations, one has $$-\dint J\left(\triangle+\frac{2}{r}\partial_r\right)J\mathrm{d}x
=\|\nabla J\|_{L^2}^2+\int|J(0,z,t)|^2\mathrm{d}z.$$
Consequently, we have
 \begin{equation}\label{3.1}
 \frac{1}{2}\frac{\mathrm{d}}{\mathrm{d}t}\|J\|_{L^2}^2+\|\nabla J\|_{L^2}^2+\int|J(0,z,t)|^2\mathrm{d}z=\int J(\omega_r \partial_r+\omega_z \partial_z )\frac{u_r}{r}\mathrm{d}x.
 \end{equation}
 Similarly, by applying the energy estimate to the equation of $\Omega$, one obtains that
\begin{equation}\label{3.2}
\frac{1}{2}\frac{\mathrm{d}}{\mathrm{d}t}\|\Omega\|_{L^2}^2
+\|\nabla \Omega\|_{L^2}^2+\int|\Omega(0,z,t)|^2\mathrm{d}z=-2\int \frac{u_\theta}{r}J\Omega\mathrm{d}x.
 \end{equation}
Notice that
$$\ba{l}\ \ \dint J(\omega_r \partial_r+\omega_z \partial_z )\frac{u_r}{r}\mathrm{d}x=\int[\nabla\times(u_{\theta}e_{\theta})]
\cdot\left(J\nabla\frac{u_\theta}{r}\right)\mathrm{d}x\\[2mm]
=\dint u_{\theta}e_{\theta}\cdot\left(\nabla J\times\nabla\frac{u_\theta}{r}\right)\mathrm{d}x
\leq\frac{1}{2}\|\nabla J\|_{L^2}^2+\frac{1}{2}\left\|u_\theta\nabla \frac{u_r}{r}\right\|_{L^2}^2,\ea
$$
and by $\eqref{3.1}$, we have
\begin{equation}\label{3.3}
 \frac{\mathrm{d}}{\mathrm{d}t}\|J\|_{L^2}^2+\|\nabla J\|_{L^2}^2+2\int|J(0,z,t)|^2\mathrm{d}z\leq\left\|u_\theta\nabla \frac{u_r}{r}\right\|_{L^2}^2.
 \end{equation}
 Now we estimate the right hand side of $\eqref{3.2}$ and $\eqref{3.3}$.
under the assumption of condition (b), let \begin{equation}\label{b}
 \varepsilon=\left(1+\ln\left(C_0\max\left\{M_0^{1/4},r_0^{-1/2}{M_1}\right\}
 +1\right)\right)^{-3/2},\end{equation} then $ \ \|\Gamma\|_{L^\infty(r\leq r_0)}\leq \varepsilon\leq 1,$  and we can apply Lemma 2.3 with $$r_1=r(t)=\min\left\{\dfrac{\varepsilon K(\varepsilon)}{a(t)},r_0\right\},$$
 and take $f=J,\ \Omega$ in $\eqref{2.3}$, we have
 \begin{equation}\label{3.4}\ba{c}
-2\dint \frac{u_\theta}{r}J\Omega\mathrm{d}x\leq\varepsilon^{\frac{1}{3}}\int \frac{|u_\theta|}{r}|\Omega|^2\mathrm{d}x+\varepsilon^{-\frac{1}{3}}\int \frac{|u_\theta|}{r}|J|^2\mathrm{d}x\\[2mm]\leq\dint
|\partial_r\Omega|^2\mathrm{d}x+\frac{C(1+\varepsilon^{\frac{1}{3}}
\|\Gamma\|_{L^\infty})}{r(t)^2}\int_{r\geq\frac{r(t)}{2}}
|\Omega|^2\mathrm{d}x\\[2mm]+\varepsilon^{-\frac{2}{3}}\dint
|\partial_rJ|^2\mathrm{d}x+\frac{C\varepsilon^{-\frac{2}{3}}(1+
\varepsilon^{\frac{1}{3}}
\|\Gamma\|_{L^\infty})}{r(t)^2}
\int_{r\geq\frac{r(t)}{2}}
|J|^2\mathrm{d}x.\ea \end{equation}
Choosing  $f=\partial_r\dfrac{u_r}{r},\ \partial_z\dfrac{u_r}{r}$ in $\eqref{2.4}$, we have
 \begin{equation}\label{3.5}
\left\|u_\theta\nabla \dfrac{u_r}{r}\right\|_{L^2}^2\leq\varepsilon^{\frac{2}{3}} \dint
\left|\partial_r\nabla\dfrac{u_r}{r}\right|^2\mathrm{d}x+
\frac{C\varepsilon^{\frac{2}{3}}(1+\varepsilon^{-\frac{2}{3}}
\|\Gamma\|_{L^\infty}^2)}{r(t)^2}\int_{r\geq\frac{r(t)}{2}}
\left|\nabla \dfrac{u_r}{r}
\right|^2\mathrm{d}x.
\end{equation}
 Denote $$M_2=1+\varepsilon^{\frac{1}{3}}\|\Gamma\|_{L^\infty}
 +\varepsilon^{-\frac{2}{3}}
\|\Gamma\|_{L^\infty}^2,$$    by Lemma 2.1 we have
$$\dint
\left|\partial_r\nabla\dfrac{u_r}{r}\right|^2\mathrm{d}x\leq\left\|\nabla^2 \dfrac{u_r}{r}\right\|_{L^2}^2\leq\left\|\partial_z \Omega\right\|_{L^2}^2.$$
Inserting $\eqref{3.4}$, $\eqref{3.5}$ into $\eqref{3.2}$, $\eqref{3.3}$,  we have
\begin{equation}\label{3.6}
 \frac{\mathrm{d}}{\mathrm{d}t}\left(\|J\|_{L^2}^2+
 \dfrac{\varepsilon^{\frac{2}{3}}}{2}\|\Omega\|_{L^2}^2\right)
 \leq  \frac{CM_2}{r(t)^2}\int_{r\geq\frac{r(t)}{2}}\left[\varepsilon^{\frac{2}{3}}
 \left(\left|\nabla \dfrac{u_r}{r}\right|^2+|\Omega|^2\right)+|J|^2\right]\mathrm{d}x.
 \end{equation}
 Denote
 $$A(t)=\|J(t)\|_{L^2}^2+
 \dfrac{\varepsilon^{\frac{2}{3}}}{2}\|\Omega(t)\|_{L^2}^2,$$ then $$A(t)\in C[0,T^*)\cap C^1(0,T^*).$$  By Lemma2.1 and the fact that
 $$\nabla\dfrac{u_r}{r}=\dfrac{\nabla u_r}{r}-\dfrac{u_r}{r^2}e_r,
 $$
   $$|\nabla u|^2=|\nabla u_r|^2+|\nabla u_{\theta}|^2
 +|\nabla u_z|^2+\left|\dfrac{u_r}{r}\right|^2+\left|\dfrac{u_{\theta}}{r}\right|^2,$$ we obtain
  $$
 \int_{r\geq\frac{r(t)}{2}}\left(\left|\nabla \dfrac{u_r}{r}\right|^2+|\Omega|^2\right)\mathrm{d}x\leq\left\|\nabla \dfrac{u_r}{r}\right\|_{L^2}^2+\|\Omega\|_{L^2}^2\leq 2\|\Omega\|_{L^2}^2,
 $$
 $$\int_{r\geq\frac{r(t)}{2}}\left(\left|\nabla \dfrac{u_r}{r}\right|^2+|\Omega|^2+|J|^2\right)\mathrm{d}x\leq \frac{C}{r(t)^2}\int |\nabla u|^2\mathrm{d}x,
 $$
hence we have
\begin{equation}\label{3.7}
\ba{l}\ \dfrac{\mathrm{d}}{\mathrm{d}t}A(t)\leq \frac{CM_2}{r(t)^2}\dint_{r\geq\frac{r(t)}{2}}
 \left[\varepsilon^{\frac{2}{3}}\left(\left|\nabla \dfrac{u_r}{r}\right|^2+|\Omega|^2\right)+|J|^2\right]\mathrm{d}x
 \\[5mm] \leq \dfrac{CM_2}{r(t)^2}\min\left\{2\varepsilon^{\frac{2}{3}}\|\Omega\|_{L^2}^2
 +\|J\|_{L^2}^2, \ \  \dfrac{C}{r(t)^2}\int |\nabla u|^2\mathrm{d}x\right\}\\[5mm] \leq \dfrac{CM_2}{r(t)^2}\min\left\{A(t), \ \ \dfrac{\|\nabla u(t)\|_{L^2}^2}{r(t)^2}\right\}.\ea
 \end{equation}
  Fix $t\in(0,T^*)$, if $r(t)=\dfrac{\varepsilon K(\varepsilon)}{a(t)}\leq r_0$, by Lemma 2.2, we have
  $$a(t)^2\leq A(t)^{\frac{1}{2}}\|\nabla u(t)\|_{L^2},$$
   $$ \dfrac{1}{r(t)^2}=\dfrac{a(t)^2}{(\varepsilon K(\varepsilon))^2}\leq\dfrac{A(t)^{\frac{1}{2}}\|\nabla u(t)\|_{L^2}}{(\varepsilon K(\varepsilon))^2}. $$ \ And  $\eqref{3.7}$ implies
 $$
 \ba{l}\ \ \ \dfrac{\mathrm{d}}{\mathrm{d}t}A(t)\leq \dfrac{CM_2A(t)^{\frac{1}{2}}\|\nabla u(t)\|_{L^2}}{(\varepsilon K(\varepsilon))^2}\min\left\{A(t), \ \ \dfrac{A(t)^{\frac{1}{2}}\|\nabla u(t)\|_{L^2}^3}{(\varepsilon K(\varepsilon))^2}\right\}\\[5mm]
 =\dfrac{CM_2A(t)\|\nabla u(t)\|_{L^2}}{(\varepsilon K(\varepsilon))^2}\min\left\{A(t)^{\frac{1}{2}}, \ \ \dfrac{\|\nabla u(t)\|_{L^2}^3}{(\varepsilon K(\varepsilon))^2}\right\}\\[5mm]\leq\dfrac{CM_2A(t)^{\frac{4}{3}}\|\nabla u(t)\|_{L^2}^2}{(\varepsilon K(\varepsilon))^{\frac{8}{3}}},\ea
 $$ otherwise, we have $r(t)=r_0$ and
 $$\dfrac{\mathrm{d}}{\mathrm{d}t}A(t)\leq \dfrac{CM_2
 \|\nabla u(t)\|_{L^2}^2}{r_0^4}.$$
 Combining the above two cases we have
 $$\dfrac{\mathrm{d}}{\mathrm{d}t}A(t)\leq CM_2
 \|\nabla u(t)\|_{L^2}^2\max\left\{\dfrac{A(t)^{\frac{4}{3}}}{(\varepsilon K(\varepsilon))^{\frac{8}{3}}},\ \ \frac{1}{r_0^4}\right\}.$$
 Denote $F(y)=\dint_y^{+\infty}\left[\max\left\{\dfrac{y^{\frac{4}{3}}}
 {(\varepsilon K(\varepsilon))^{\frac{8}{3}}},\ \ \frac{1}{r_0^4}\right\}\right]^{-1}\mathrm{d}y,$ then
 $$
 \dfrac{\mathrm{d}}{\mathrm{d}t}F(A(t))\geq CM_2
 \|\nabla u(t)\|_{L^2}^2,
 $$
  and we can use the energy identity to obtain
 $$\ F(A(0))-F(A(t))\leq
 \int_0^t{CM_2\|\nabla u(s)\|_{L^2}^2}\mathrm{d}s\leq
 {C_1M_2\|u_0\|_{L^2}^2}.
 $$
 Therefore, if the condition $(a)':\ F( A(0))>C_1M_2\|u_0\|_{L^2}^2$ is satisfied, then $$\inf\limits_{0<t<T^*}F(A(t))>0,\  \sup\limits_{0<t<T^*}A(t)<+\infty,\  \sup\limits_{0<t<T^*}\|\Omega(t)\|_{L^2}^2<+\infty, $$ and these  imply the global regularity.

 Now we claim that, if $C_0>C_*\max\left\{1,\sqrt{{C_1}/{3}}\right\}$, then $\eqref{b}$ implies condition $(a)'$. Here $C_1,\ C_*$ are absolute positive constants. Notice that
 $$
 \ba{ll}F(A(0))&\geq F\left(\max\left\{A(0),\dfrac{(\varepsilon K(\varepsilon))^2}{r_0^3}\right\}\right)\\[4mm]&={3\max\left\{A(0),\dfrac{(\varepsilon K(\varepsilon))^2}{r_0^3}\right\}^{-\frac{1}{3}}(\varepsilon K(\varepsilon))^{\frac{8}{3}}},\ea
 $$
 $$A(0)^{\frac{1}{2}}=\left(\|J(0)\|_{L^2}^2+
 \dfrac{\varepsilon^{\frac{2}{3}}}{2}\|\Omega(0)\|_{L^2}^2\right)^{\frac{1}{2}}
 \leq\|J_0\|_{L^2}+\|\Omega_0\|_{L^2},
 $$ from the definition of $M_2,\ M_1,\ M_0$ and $(1.4)$ we have
 $$M_2<\varepsilon^{-\frac{2}{3}}(1+\|\Gamma\|_{L^\infty})^2,\ \ \ \ M_2\|u_0\|_{L^2}^2\leq\varepsilon^{-\frac{2}{3}}M_1^2,\ \ \
 A(0)^{\frac{1}{2}}M_1^3\leq M_0.
 $$
And
$\eqref{b}$ implies
$$K_0(\varepsilon)>C_0\max\left\{M_0^{1/4},r_0^{-1/2}{M_1}\right\},
$$
hence we obtain
 $$\ba{ll}\left(\dfrac{M_2\|u_0\|_{L^2}^2}{F( A(0))}\right)^{\frac{3}{2}}& \leq\dfrac{\varepsilon^{-1}M_1^3\max
 \left\{A(0)^{\frac{1}{2}},{\varepsilon K(\varepsilon)}/{r_0^{3/2}}\right\}}{3^{3/2}(\varepsilon K(\varepsilon))^4}\\[4mm] & \leq\dfrac{\max
 \left\{M_0,\varepsilon K(\varepsilon){M_1^3}/{r_0^{3/2}}\right\}}{3^{3/2}\varepsilon
 (\varepsilon K(\varepsilon))^4}\\[3mm]&
 \leq3^{-\frac{3}{2}}\max
 \left\{\dfrac{M_0C_*^4}{K_0(\varepsilon)^4},
 \dfrac{C_*^3M_1^3}{K_0(\varepsilon)^3r_0^{3/2}}\right\}\\[4mm] &<
 3^{-\frac{3}{2}}\max
 \left\{\dfrac{C_*^4}{C_0^4},
 \dfrac{C_*^3}{C_0^3}\right\}=3^{-\frac{3}{2}}
 \left(\dfrac{C_*}{C_0}\right)^3\\ &
 \leq3^{-\frac{3}{2}}
 \left(\sqrt{\dfrac{3}{C_1}}\right)^3=C_1^{-\frac{3}{2}},\ea
 $$
 $$\ \ \ \ \ \mbox{and} \ \ \ \  \ \ F( A(0))>C_1M_2\|u_0\|_{L^2}^2.$$
  Therefore the claim is true, this completes the proof of Theorem 1.1.\end{proof}

\begin{proof}[\bf Proof of\ Corollary\ \ref{cor1.1}.] First, we can take $r_0\in(0,\delta_0)$ such that
 $$r_0^{-1/2}{M_1}\geq M_0^{1/4},\ \ C_0M_1r_0^{-1/2}+1< e^{-1}r_0^{-1},$$ by $\eqref{1.6}$, we have $|\ln r_0|^{-3/2}\geq\|\Gamma\|_{L^\infty(r\leq r_0)}$, using the property of this $r_0$ we have $$\ba{c}\ \ \left(1+\ln\left(C_0\max\left\{M_0^{1/4},r_0^{-1/2}{M_1}\right\}
 +1\right)\right)^{-3/2}\\
 =\left(1+\ln\left(C_0r_0^{-1/2}{M_1}
 +1\right)\right)^{-3/2}>\left(1+\ln\left( e^{-1}r_0^{-1}\right)\right)^{-3/2}
 =|\ln r_0|^{-3/2},\ea$$
  Therefore condition (a) in Theorem\ 1.1 is satisfied, and we can use Theorem\ 1.1 to get the global regularity, this completes the proof of\ Corollary\ \ref{cor1.1}. \end{proof}

{\bf Acknowledgments}  The author would like to thank the
professors Gang Tian and Zhifei Zhang for some valuable  suggestions. \  \


\begin{thebibliography}{00}


\bibitem{2} D. Chae, J. Lee, On the regularity of the axisymmetric solutions of the Navier-Stokes
equations, Math. Z., 239 (2002), 645-671.
\bibitem{3} C. C. Chen, R. M. Strain, T. P. Tsai, H. T. Yau, Lower bound on the blow-up rate of
the axisymmetric Navier-Stokes equations, Int. Math Res. Notices (2008), vol. 8, artical
ID rnn016, 31 pp.
\bibitem{10}C. C. Chen, R. M. Strian, T. P. Tsai, H. T. Yau,  Lower Bounds on the Blow-Up Rate of the Axi-Symmetric Navier-Stokes Equations II, Electron. Comm. PDE. 3 (2009) 203-232.
\bibitem{5}  H. Chen, D. Fang, T. Zhang, Regularity of 3D axisymmetric Navier-Stokes equations.
Available online at arXiv:1505.00905
\bibitem{8}T. Y. Hou, Z. Lei,  C. M. Li, Global reuglarity of the 3D axi-symmetric Navier-Stokes
equations with anisotropic data, Comm. P.D.E., 33 (2008), 1622-1637.
\bibitem{23}
A. Kubica, M. Pokorn¡äy, W. Zajaczkowski, Remarks on regularity criteria for axially symmetric
weak solutions to the Navier-Stokes equations, Math. Methods Appl. Sci. 35 (2012)
no. 3, 360-371.
\bibitem{24} O. A. Ladyzhenskaya, Unique global solvability of the three-dimensional Cauchy problem
for the Navier-Stokes equations in the presence of axial symmetry, Zap. Naucn. Sem.
Leningrad. Otdel. Math. Inst. Steklov. 7(1968) 155-177.
\bibitem{12}
Z. Lei, On Axially Symmetric Incompressible Magnetohydrodynamics in Three Dimen-
sions. Available online at arXiv:1212.5968, to appear in JDE.
\bibitem{9} Z. Lei, E. A Navas,  Q. S. Zhang, A priori bound on the velocity in Axially symmetric Navier-Stokes Equations, arXiv:1309.6625v2.
 \bibitem{1} Z. Lei,	Q. S. Zhang,
 Criticality of the Axially Symmetric
Navier-Stokes Equations, arXiv:1505.02628v2.
\bibitem{26} S. Leonardi, J. Malek, J. Necas, M. Pokorny, On axially symmetric flows in $\mathbb{R}^3$, Z. Anal.
Anwendungen, 18(1999) no.3, 639-649.
\bibitem{17} A. Majda, A. Bertozzi, Vorticity and incompressible flow. Cambridge Texts in Applied
Mathematics, 27. Cambridge University Press, Cambridge, 2002.
\bibitem{19}
C. Miao, X. Zheng, On the global well-posedness for the Boussinesq system with
horizontal dissipation. Comm. Math. Phys. 321 (2013), no. 1, 33-67.
\bibitem{20} J. Neustupa, M. Pokorny, An interior regularity criterion  for an axially symmetric
suitable weak solution to the Navier-Stokes equations, J. Math. Fluid Mech., 2(2000),
381-399.
\bibitem{39} M. R. Ukhovskii, V. I. Yudovich, Axially symmetric flows of ideal and viscous fluids filling
the whole space, Prikl.Mat.Meh.3259-69(Russian),translated as J.Appl.Math.Mech.
32(1968) 52-61.




\end{thebibliography}
\end{document}